\newcommand{\vertiii}[1]{{\left\vert\kern-0.25ex\left\vert\kern-0.25ex\left\vert #1
		\right\vert\kern-0.25ex\right\vert\kern-0.25ex\right\vert}}
\theoremstyle{plain}
\newtheorem*{Thm}{Theorem}
\theoremstyle{definition}
\newtheorem*{Def}{Definition}
\theoremstyle{remark}
\newtheorem*{Rmk}{Remark}
\renewcommand{\epsilon}{\varepsilon}
\title{A nonstandard-analytic proof of a theorem regarding noncommutative ergodic optimizations}
\author{Aidan Young$^1$}
\address{University of North Carolina at Chapel Hill}
\email{$^1$\url{aidanjy@live.unc.edu}}
\begin{document}
\maketitle

\begin{abstract}
In \cite{Number2}, we extended the notion of ergodic optimization to the setting of C*-dynamical systems of countable discrete groups. Among the key results of that paper was that given an action $G \stackrel{\Xi}{\curvearrowright} \mathfrak{M}$ of a countable discrete amenable group $G$ on a W*-probability space $(\mathfrak{M}, \rho)$ by $\rho$-preserving $*$-automorphisms of $\mathfrak{M}$, a positive element $x \in \mathfrak{M}$, and a right Følner sequence $\mathcal{F} = (F_k)_{k \in \mathbb{N} }$ for $G$, the sequence
$$\left( \left\| \frac{1}{|F_k|} \sum_{g \in F_k} \Xi_g x \right\| \right)_{ k \in \mathbb{N} }$$
converges to a value $\Gamma(x)$ which can be described in the language of ergodic optimization. We provide here an alternate, more direct proof of that theorem using the tools of nonstandard analysis.
\end{abstract}

In \cite{Number2}, we extended the basic concepts of ergodic optimization from topological dynamics (see \cite{Jenkinson}, \cite{JenkinsonEvery}) to the study of C*-dynamical systems. In that same paper, we also extended the gauge map introduced in \cite{Assani-Young} to the context of C*-dynamical systems, and demonstrated that in this more general setting, the gauge corresponds to an ergodic maximum \cite[Theorem 5.3]{Number2}. Though our proof of that result in that paper is inspired by the techniques of nonstandard analysis, the proof presented there does not explicitly use the language of nonstandard analysis, and as a result, the argument is more roundabout than it needs to be. We provide here a proof of that same result, but making explicit use of the tools of nonstandard analysis.

\begin{Def}
Let $G$ be a countable discrete group. We call $G$ \emph{amenable} if there exists a sequence $(F_k)_{k \in \mathbb{N} }$ of nonempty finite subsets of $G$ such that
$$\lim_{k \to \infty} \frac{\left| F_k \Delta F_k g \right|}{|F_k|} = 0$$
for all $g \in G$. We call this sequence $(F_k)_{ k \in \mathbb{N} }$ a \emph{Følner sequence} for $G$.
\end{Def}

\begin{Rmk}
What we call here a Følner sequence for $G$ would be referred to in other literature as a \emph{right Følner sequence} for $G$, as opposed to a \emph{left Følner sequence} for $G$, which would satisfy the identity
\begin{align*}
\lim_{k \to \infty} \frac{\left| F_k \Delta g F_k \right|}{|F_k|}	& = 0	& (\forall g \in G) .
\end{align*}
Since we're only concerned here with right Følner sequences, we simply refer to them as Følner sequences.
\end{Rmk}

We define a \emph{C*-dynamical system} to be a triple $(\mathfrak{A}, G, \Theta)$ consisting of a unital C*-algebra $\mathfrak{A}$, a countable discrete amenable group $G$, and an action $\Theta$ of $G$ on $\mathfrak{A}$ by *-automorphisms. Denote by $\mathcal{S}^G$ the weak*-compact convex set of states $\phi$ on $\mathfrak{A}$ satisfying the $\Theta$-invariance property
\begin{align*}
\phi	& = \phi \circ \Theta_g	& (\forall g \in G) .
\end{align*}
Since $G$ is amenable, the set $\mathcal{S}^G$ is nonempty. Given a compact, convex subset $K$ of $\mathcal{S}^G$ and positive $a \in \mathfrak{A}$, we define the value
$$m \left( a \vert K \right) : = \sup_{\phi \in K} \phi(a) .$$

We define a \emph{W*-dynamical system} to be a quadruple $(\mathfrak{M}, \rho, G, \Xi)$, where $\mathfrak{M}$ is a W*-dynamical system equipped with a faithful normal state $\rho$, and $\Xi$ is an action of the countable discrete amenable group $G$ on $\mathfrak{M}$ by *-automorphisms satisfying the identity
\begin{align*}
\rho	& = \rho \circ \Xi_g	& (\forall g \in G) .
\end{align*}
Finally, we define a \emph{C*-model} of a W*-dynamical system $(\mathfrak{M}, \rho, G, \Xi)$ to be a quadruple $(\mathfrak{A}, G, \Theta ; \iota)$, where $(\mathfrak{A}, G, \Theta)$ is a C*-dynamical system and $\iota : \mathfrak{A} \to \mathfrak{M}$ is a *-morphism satisfying the equivariance identity
\begin{align*}
\Xi_g \circ \iota	& = \iota \circ \Theta_g	& (\forall g \in G) ,
\end{align*}
and such that the image $\iota(\mathfrak{A})$ is dense in $\mathfrak{M}$ with respect to the weak operator topology. We call the C*-model \emph{faithful} if $\iota$ is an injective map.

\begin{Thm}\label{Gamma converges}
Let $(\mathfrak{M}, \rho, G, \Xi)$ be an ergodic W*-dynamical system, and let $(\mathfrak{A}, G, \Theta; \iota)$ be a C*-model of $(\mathfrak{M}, \rho, \mathbb{Z}, \Xi)$. Let $\mathcal{F} = (F_k)_{k \in \mathbb{N} }$ be a Følner sequence for $G$. Then if $a \in \mathfrak{A}$ is a positive element, then the sequence $\left( \left\| \frac{1}{|F_k|} \sum_{g \in F_k} \Xi_g \iota(a) \right\| \right)_{k \in \mathbb{N} }$ converges, and its limit is
	$$m \left( a \vert \operatorname{Ann}(\ker \iota) \right) .$$
\end{Thm}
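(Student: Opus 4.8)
The plan is to recast the entire question as the computation of the standard part of one internal norm indexed by an arbitrary infinite hypernatural, so that the usual ``pass to a weak*-convergent subnet'' step is replaced by a single application of the standard part map. Write $c_k := \left\| \frac{1}{|F_k|}\sum_{g \in F_k}\Xi_g\iota(a)\right\|$ for the $k$-th term of the sequence, and set $b_k := \frac{1}{|F_k|}\sum_{g\in F_k}\Theta_g a \in \mathfrak{A}$. First I would reduce $c_k$ to an intrinsic quantity on $\mathfrak{A}$: the equivariance identity $\Xi_g\circ\iota = \iota\circ\Theta_g$ gives $\frac{1}{|F_k|}\sum_{g\in F_k}\Xi_g\iota(a) = \iota(b_k)$, and since any $*$-morphism factors as an isometric injection on $\mathfrak{A}/\ker\iota$, we obtain $c_k = \|\iota(b_k)\| = \|\dot b_k\|_{\mathfrak{A}/\ker\iota}$. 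As $b_k$ is positive, so is its image in the quotient, and hence $c_k = \max\{\sigma(b_k) : \sigma \in \mathcal{S}_0\}$, where $\mathcal{S}_0$ is the weak*-compact set of states on $\mathfrak{A}$ annihilating $\ker\iota$ (the maximum being attained by weak*-compactness of $\mathcal{S}_0$ and continuity of $\sigma \mapsto \sigma(b_k)$). This intrinsic formula is what will allow the nonstandard construction to land directly inside $\operatorname{Ann}(\ker\iota)$, without ever invoking states on $\mathfrak{M}$.

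Recalling that a standard real sequence converges to $L$ exactly when ${}^*c_N \approx L$ for every infinite $N \in {}^*\N$, I would fix an arbitrary infinite $N$ and aim to show $\operatorname{st}({}^*c_N) = m(a \mid \operatorname{Ann}(\ker\iota))$; proving this for all infinite $N$ simultaneously yields convergence and the value of the limit. For the inequality $\operatorname{st}({}^*c_N) \le m(a \mid \operatorname{Ann}(\ker\iota))$, transfer of the attainment formula supplies an internal state $\sigma_N \in {}^*\mathcal{S}_0$ with $\sigma_N({}^*b_N) = {}^*c_N$. I would then average it along the Følner set, setting $\tau_N := \frac{1}{|F_N|}\sum_{g\in F_N}\sigma_N \circ {}^*\Theta_g$; this is again an internal state annihilating $\ker{}^*\iota$, because $\ker\iota$ is $\Theta$-invariant (immediate from equivariance, and preserved under transfer), and it satisfies $\tau_N({}^*a) = \sigma_N({}^*b_N) = {}^*c_N$. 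The standard functional $\phi$ on $\mathfrak{A}$ given by $\phi(x) := \operatorname{st}(\tau_N({}^*x))$ is then a state with $\phi(a) = \operatorname{st}({}^*c_N)$.

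It remains to confirm $\phi \in \operatorname{Ann}(\ker\iota)$, which reduces to two properties. Annihilation of $\ker\iota$ is immediate, since $x \in \ker\iota$ forces ${}^*x \in \ker{}^*\iota$, hence $\tau_N({}^*x) = 0$ and $\phi(x) = 0$. The $\Theta$-invariance of $\phi$ is where the Følner hypothesis is spent and is the conceptual core: for fixed $g \in G$ a reindexing of the averaging set yields the internal estimate $\left| \tau_N({}^*\Theta_g\,{}^*x) - \tau_N({}^*x)\right| \le \frac{|F_N \Delta F_N g|}{|F_N|}\,\|x\|$, and since $\lim_k \frac{|F_k \Delta F_k g|}{|F_k|} = 0$, the nonstandard characterization of the limit makes $\frac{|F_N \Delta F_N g|}{|F_N|}$ infinitesimal at the infinite index $N$, so $\phi(\Theta_g x) = \phi(x)$. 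Thus $\phi$ is an invariant state annihilating $\ker\iota$, and $m(a \mid \operatorname{Ann}(\ker\iota)) \ge \phi(a) = \operatorname{st}({}^*c_N)$. The reverse inequality is easy: for any standard $\phi \in \operatorname{Ann}(\ker\iota)$ we have $\phi(b_k) = \phi(a)$ by invariance, so $c_k \ge \phi(a)$ for all $k$; transferring gives ${}^*c_N \ge \phi(a)$, and taking standard parts and a supremum over $\phi$ yields $\operatorname{st}({}^*c_N) \ge m(a \mid \operatorname{Ann}(\ker\iota))$.

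The step I expect to require the most care is the invariance verification together with the transfer of the attainment principle ``the norm of a positive element is the maximum of the states evaluated at it,'' which furnishes the maximizing internal state $\sigma_N$. The virtue of the nonstandard framing is concentrated exactly at the invariance step: the standard proof must extract a subnet and control the Følner defect uniformly along it, whereas here everything collapses to the single fact that $|F_N \Delta F_N g|/|F_N|$ is infinitesimal for each infinite $N$. It is worth noting that the argument appears to use only equivariance and the Følner property, and not the ergodicity of the system.
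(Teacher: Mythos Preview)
Your proof is correct, and the nonstandard-analytic core is the same as the paper's: at an unlimited index one picks a state attaining the norm of the ergodic average, averages it over the F{\o}lner set, takes standard parts to obtain a genuine state on the C*-algebra, and verifies $\Theta$-invariance from the fact that $|F_N \Delta F_N g|/|F_N|$ is infinitesimal for each standard $g$.

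The one genuine organizational difference is where the quotient by $\ker\iota$ enters. You factor through $\mathfrak{A}/\ker\iota$ at the very start, so that $c_k = \max\{\sigma(b_k):\sigma\in\mathcal{S}_0\}$ with $\mathcal{S}_0$ the states annihilating $\ker\iota$; your internal maximizer $\sigma_N$ is then automatically in ${}^{*}\mathcal{S}_0$, and both inequalities drop out with no case distinction. The paper instead selects the norm-attaining states $\sigma_k$ on $\mathfrak{M}$, builds the limit state $\omega^{(K)}$ there, pulls it back through $\iota$, and only then confronts $\ker\iota$: the inequality $\phi(a)\ge m(a\mid\operatorname{Ann}(\ker\iota))$ is first proved under the extra hypothesis that $\iota$ is injective (hence isometric), and the general case is reduced to this by passing to the faithful model $(\mathfrak{A}/\ker\iota, G, \tilde{\Theta}; \tilde{\iota})$ at the end. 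Your route buys a shorter, case-free argument; the paper's route keeps the construction on $\mathfrak{M}$ slightly longer, which makes the state $\omega^{(K)}$ available there if one wanted it. Your final remark that ergodicity is never invoked is correct and applies equally to the paper's proof.
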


\begin{proof}
For each $k \in \mathbb{N}$, let $\sigma_k$ be a state on $\mathfrak{M}$ such that
$$\sigma_k \left( \frac{1}{|F_k|} \sum_{g \in F_k} \Xi_g \iota(a) \right) = \left\| \frac{1}{|F_k|} \sum_{g \in F_k} \Xi_g \iota(a) \right\| . $$
This state exists because $\frac{1}{|F_k|} \sum_{g \in F_k} \Xi_g \iota(a)$ is positive for all $k \in \mathbb{N}$, and for any $x \in \mathfrak{M}$ exists a state $\phi$ such that $\left| \phi(x) \right| = \| x \|$ (a standard result, see e.g. \cite[II.6.3.3]{Blackadar}). Fix an unlimited hypernatural $K \in \prescript{*}{}{\mathbb{N}} \setminus \mathbb{N}$, and define a state $\omega^{(K)}$ on $\mathfrak{M}$ as follows.
Let $\mathbb{L} : = \bigcup_{n \in \mathbb{N} } \left\{ z \in \prescript{*}{}{\mathbb{C}} : |z| \leq n \right\}$ be the complex algebra of all limited hypercomplex numbers, and let $\operatorname{sh} : \mathbb{L} \to \mathbb{C}$ be the shadow functional, i.e. $\operatorname{sh}(z)$ is the unique $z' \in \mathbb{C}$ such that $\left| z - z' \right| < \frac{1}{n}$ for all $n \in \mathbb{N}$. Given $x \in \mathfrak{M}$, let $f_x : \mathbb{N} \to \left\{ z \in \mathbb{C} : |z| \leq \| x \| \right\}$ be the (standard) sequence
$$f_x(k) = \sigma_k \left( \frac{1}{|F_k|} \sum_{g \in F_k} \Xi_g \iota(a) \right) ,$$
and let $\prescript{*}{}{f}_x : \prescript{*}{}{\mathbb{N}} \to \left\{ z \in \prescript{*}{}{\mathbb{C}} : |z| \leq \| x \| \right\}$ be the hypersequence extending $f_x$. Finally, set
$$\omega^{(K)}(x) = \operatorname{sh} \left( \prescript{*}{}{f}_x(K) \right) .$$
Since $\operatorname{sh} : \mathbb{L} \to \mathbb{C}$ is a complex-linear functional, we know that $\omega^{(K)}$ is a state.

We can also check that $\omega^{(K)}$ is $\Xi$-invariant. Fix $g_0 \in G$. Then for $k \in \mathbb{N}$, we have
\begin{align*}
	\left| f_x(k) - f_{\Xi_{g_0} x}(k) \right|	& = \frac{1}{|F_k|} \left| \left( \sum_{g \in F_k} \sigma_k(\Xi_g \Xi_{g_0} x) \right) - \left( \sum_{g \in F_k} \sigma_k (\Xi_g x) \right) \right| \\
	& = \frac{1}{|F_k|} \left| \left( \sum_{g \in F_k} \sigma_k(\Xi_{g g_0} x) \right) - \left( \sum_{g \in F_k} \sigma_k (\Xi_g x) \right) \right| \\
	& = \frac{1}{|F_k|} \left| \left( \sum_{g \in F_k g_0} \sigma_k(\Xi_g x) \right) - \left( \sum_{g \in F_k} \sigma_k (\Xi_g x) \right) \right| \\
	& = \frac{1}{|F_k|} \left| \left( \sum_{g \in F_k g_0 \setminus F_k} \sigma_k(\Xi_g x) \right) - \left( \sum_{g \in F_k \setminus F_k g_0} \sigma_k (\Xi_g x) \right) \right| \\
	& \leq \frac{1}{|F_k|} \left( \left(\sum_{g \in F_k g_0 \setminus F_k} \|x\| \right) + \left(\sum_{g \in F_k \setminus F_k g_0} \|x\| \right) \right) \\
	& = \frac{|F_k \Delta F_k g_0|}{|F_k|} \| x \| \\
	& \stackrel{k \to \infty}{\to} 0 .
\end{align*}
We can thus infer that
$$\omega^{(K)}(x) - \omega^{(K)}(\Xi_{g_0} x) = \operatorname{sh} \left( \prescript{*}{}f_x(K) \right) - \operatorname{sh} \left( \prescript{*}{}f_{\Xi_{g_0} x}(K) \right) = \operatorname{sh} \left( \prescript{*}{}f_x (K) - \prescript{*}{}f_{ \Xi_{g_0} x }(K) \right) = 0 .$$
So $\omega^{(K)}$ is $\Xi$-invariant.

Our goal is to show that $\omega^{(K)}(\iota(a)) = m \left( a \vert \operatorname{Ann}(\ker \iota) \right)$. If we prove this, then we'll know that $\operatorname{sh} \left( \prescript{*}{}f_{\iota(a)}(K) \right) = m \left( a \vert \operatorname{Ann}(\ker \iota) \right)$ for all unlimited hypernaturals $K \in \prescript{*}{}{ \mathbb{N} }$, meaning that the sequence $\left( \sigma_k \left( \frac{1}{|F_k|} \sum_{g \in F_k } \Xi_g \iota(a) \right) \right)_{k \in \mathbb{N} } = \left( \left\| \frac{1}{|F_k|} \sum_{g \in F_k} \Xi_g \iota(a) \right\| \right)_{ k \in \mathbb{N} }$ converges to $m \left( a \vert \operatorname{Ann}(\ker \iota) \right)$.

Set $\phi = \omega^{(K)} \circ \iota \in \mathcal{S}^G$. We claim that $\phi$ is $\left( a \vert \operatorname{Ann}(\ker \iota) \right)$-maximizing. Since $\phi$ is a $\Theta$-invariant state on $\mathfrak{A}$, and vanishes on $\ker \iota$, we know that
$$\phi(a) \leq m \left( a \vert \operatorname{Ann}(\ker \iota) \right) .$$

Next, we prove the opposite inequality under the assumption that $(\mathfrak{A}, G, \Theta; \iota)$ is faithful. Since $\iota$ is injective, we know that it's an isometry. Let $\psi \in \mathcal{S}^G$. Then
\begin{align*}
	\psi(a)	& = \psi \left( \frac{1}{|F_k|} \sum_{g \in F_k} \Theta_g a \right) \\
	& \leq \left\| \frac{1}{|F_k|} \sum_{g \in F_k} \Theta_g a \right\| \\
	& = \left\| \frac{1}{|F_k|} \sum_{g \in F_k} \Xi_g \iota(a) \right\| .
\end{align*}
Therefore $\left\| \frac{1}{|F_k|} \sum_{g \in F_k} \Xi_g \iota(a) \right\| \in [\psi(a), \| a \|]$ for all $k \in \mathbb{N}$, so $\phi(a) \in [\psi(a) , \| a \|]$.
So
\begin{align*}
	\psi(a)	& \leq \phi(a) \\
	\Rightarrow \sup_{\psi \in \mathcal{S}^G} \psi(a)	& \leq \phi(a) .
\end{align*}
Thus
$$m \left( a \vert \mathcal{S}^G \right) = \sup_{\psi \in \mathcal{S}^G} \psi(a) \leq \phi(a) .$$
This establishes that $\phi$ is $\left( a \vert \operatorname{Ann}(\ker \iota) \right)$-maximizing when the C*-model is faithful, and in particular that $\phi(a) = \omega^{(K)}(\iota(a)) = m \left( a \vert \operatorname{Ann}(\ker \iota) \right)$.

Now, suppose that $(\mathfrak{A}, \mathbb{Z}, \Theta; \iota)$ is not necessarily faithful, and let $\left( \tilde{\mathfrak{A}} , G, \tilde{\Theta}; \tilde{\iota} \right)$ be the faithful C*-model given by
\begin{align*}
	\tilde{\mathfrak{A}}	& = \mathfrak{A} / \ker \iota , \\
	\tilde{\Theta}_g(a + \ker \iota)	& = \Theta_g a + \ker \iota , \\
	\tilde{\iota}(a + \ker \iota)	& = \iota(a) .
\end{align*}

Let $\tilde{\mathcal{S}}^G$ denote the family of $\tilde{\Theta}$-invariant states on $\tilde{\mathfrak{A}}$, and let $\pi : \mathfrak{A} \twoheadrightarrow \tilde{\mathfrak{A}}$ be the quotient map $\pi : a \mapsto a + \ker \iota$, so the following diagram commutes:
$$
\begin{tikzcd}
	\mathfrak{A} \arrow[r, two heads, "\pi"] \arrow[rd, "\iota"]	& \tilde{\mathfrak{A}} \arrow[d, hook, dotted, "\tilde{\iota}"] \\
	& \mathfrak{M}
\end{tikzcd}
$$
We claim that there's a bijective correspondence between $\tilde{\mathcal{S}}^G$ and $\operatorname{Ann}(\ker \iota)$. If $\psi$ is a $\tilde{\Theta}$-invariant state on $\tilde{\mathfrak{A}}$, then we can pull it back to a state $\psi_0$ on $\mathfrak{A}$ by
$$\psi_0 = \psi \circ \pi .$$
Then $\psi_0$ vanishes on $\ker \pi = \ker \iota$, so it's an element of $\operatorname{Ann}(\ker \iota)$. On the other hand, if we started with a $\Theta$-invariant state $\psi$ on $\mathfrak{A}$ that vanished on $\ker \iota$, then we could push it to a state $\tilde{\psi}$ on $\tilde{\mathfrak{A}}$ by defining
$$\tilde{\psi}(\pi(a)) = \psi(a) ,$$
which is well-defined because $\ker \iota \subseteq \ker \psi$. These two processes satisfy the identities
\begin{align*}
	\psi_0(a)	& = \psi(\pi(a)) , \\
	\psi(a)	& = \tilde{\psi}(\pi(a)) .
\end{align*}

Since $\left( \tilde{\mathfrak{A}} , G, \tilde{\Theta}; \tilde{\iota} \right)$ is faithful, we know from our previous argument that $\tilde{\phi} \in \tilde{\mathcal{S}}^G$ is $\left( \pi(a) \vert \tilde{\mathcal{S}}^G \right)$-maximizing, and therefore (due to the bijective correspondence between $\tilde{\mathcal{S}}^G$ and $\operatorname{Ann}(\ker \iota)$) we can infer that $\phi$ is $\left( a \vert \operatorname{Ann}(\ker \iota) \right)$-maximizing. Therefore
$$m \left( a \vert \operatorname{Ann}(\ker \iota) \right) = m \left( \pi(a) \vert \tilde{\mathcal{S}}^G \right) = \tilde{\phi} (\pi(a))) = \phi(a) = \omega^{(K)}(\iota(a)) .$$

Since we've shown that $\operatorname{sh} \left( \prescript{*}{}f_{\iota(a)}(K) \right) = m \left( a \vert \operatorname{Ann}(\ker \iota) \right)$ for all unlimited hypernaturals $K \in \prescript{*}{}{ \mathbb{N} } \setminus \mathbb{N}$, we can conclude that $\lim_{k \to \infty} \left\| \frac{1}{|F_k|} \sum_{g \in F_k} \Xi_g \iota(a) \right\|$ exists, and is equal to $m \left( a \vert \operatorname{Ann}(\ker \iota) \right)$.
\end{proof}

\section*{Acknowledgments}

This paper is written as part of the author's graduate studies. He is grateful to his beneficent advisor, professor Idris Assani, for no shortage of helpful guidance.

\bibliography{Bibliography}
\end{document}